\documentclass[10pt]{amsart}
%\widowpenalty10000
%\clubpenalty10000
%%%%%%%%%%%%%%%%%%%%%%%%%%%%%%%%%%%%%%%%%%%%%%%%%%%%%%%%%%%%%%%%%
\usepackage{amssymb,amstext,amsmath,amscd,amsthm,amsfonts,enumerate,graphicx,latexsym,stmaryrd,multicol,geometry}
\usepackage[usenames]{color}
\usepackage[all]{xy}
%\usepackage{showkeys}
%%%%%%%%%%%%%%%%%%%%%%%%%%%%%%%%%%%%%%%%%%%%%%%%%%%%%%%%%%%%%%%%%
\geometry{left=25mm,right=25mm,top=25mm,bottom=25mm}
\tolerance=9999
%\renewcommand*{\thefootnote}{$\clubsuit$}
%\renewcommand{\thefootnote}{\fnsymbol{footnote}}
%%%%%%%%%%%%%%%%%%%%%%%%%%%%%%%%%%%%%%%%%%%%%%%%%%%%%%%%%%%%%%%%%
%\newcommand{\com}[1]{{\color{green} #1}}
%\newcommand{\old}[1]{{\color{red} #1}}
%\newcommand{\new}[1]{{\color{blue} #1}}
%%%%%%%%%%%%%%%%%%%%%%%%%%%%%%%%%%%%%%%%%%%%%%%%%%%%%%%%%%%%%%%%%
\newtheorem{thm}{Theorem}[section]
\newtheorem{lem}[thm]{Lemma}
\newtheorem{prop}[thm]{Proposition}
\newtheorem{cor}[thm]{Corollary}
%%%%%%%%%%%%%%%%%%%%%%%%%%%%%%%%%%%%%%%%%%%%%%%%%%%%%%%%%%%%%%%%%
\theoremstyle{definition}
\newtheorem{dfn}[thm]{Definition}
\newtheorem{ques}[thm]{Question}

\newtheorem{rem}[thm]{Remark}
\newtheorem{conv}[thm]{Convention}

\newtheorem{ex}[thm]{Example}

\newtheorem*{claim*}{Claim}
%%%%%%%%%%%%%%%%%%%%%%%%%%%%%%%%%%%%%%%%%%%%%%%%%%%%%%%%%%%
\theoremstyle{remark}
\newtheorem*{ac}{Acknowlegments}
%\newtheorem*{conv}{Convention}

%%%%%%%%%%%%%%%%%%%%%%%%%%%%%%%%%%%%%%%%%%%%%%%%%%%%%%%%%%%
\renewcommand{\qedsymbol}{$\blacksquare$}
\numberwithin{equation}{thm}
%%%%%%%%%%%%%%%%%%%%%%%%%%%%%%%%%%%%%%%%%%%%%%%%%%%%%%%%
\def\A{\mathcal{A}}
\def\a{\mathfrak{a}}
\def\b{\mathfrak{b}}
\def\C{\mathcal{C}}
\def\c{\operatorname{\mathsf{C}}}
\def\cof{\operatorname{\mathsf{Cof}}}
\def\cok{\operatorname{Coker}}
\def\D{\mathrm{D}}
\def\depth{\operatorname{depth}}
\def\Ext{\operatorname{Ext}}
\def\ge{\geqslant}
\def\h{\operatorname{H}}
\def\Hom{\operatorname{Hom}}
\def\ipd{\operatorname{IPD}}
\def\ker{\operatorname{Ker}}

\def\m{\mathfrak{m}}
\def\Mod{\operatorname{\mathsf{Mod}}}
\def\mod{\operatorname{\mathsf{mod}}}
\def\nf{\operatorname{NF}}
\def\p{\mathfrak{p}}
\def\pd{\operatorname{pd}}
\def\res{\operatorname{\mathsf{res}}}
\def\rfd{\operatorname{Rfd}}
\def\sing{\operatorname{Sing}}
\def\spec{\operatorname{Spec}}
\def\supp{\operatorname{Supp}}
\def\syz{\mathrm{\Omega}}
\def\V{\mathrm{V}}
\def\X{\mathcal{X}}
\def\Z{\mathbb{Z}}
%%%%%%%%%%%%%%%%%%%%%%%%%%%%%%%%%%
\begin{document}
\allowdisplaybreaks
\title[Cofiniteness of local cohomology modules and subcategories of modules]{Cofiniteness of local cohomology modules\\
and subcategories of modules}
\author{Ryo Takahashi}
\address{Graduate School of Mathematics, Nagoya University, Furocho, Chikusaku, Nagoya 464-8602, Japan}
\email{takahashi@math.nagoya-u.ac.jp}
\urladdr{https://www.math.nagoya-u.ac.jp/~takahashi/}
\author{Naoki Wakasugi}
\address{Graduate School of Mathematics, Nagoya University, Furocho, Chikusaku, Nagoya 464-8602, Japan}
\email{naoki.wakasugi.e6@math.nagoya-u.ac.jp}
\thanks{2020 {\em Mathematics Subject Classification.} 13D45, 13C60}
\thanks{{\em Key words and phrases.} local cohomology module, cofinite module, abelian subcategory, thick subcategory, Serre subcategory, resolving subcategory, support, nonfree locus, infinite projective dimension locus, large restricted flat dimension}
\thanks{RT was partly supported by JSPS Grant-in-Aid for Scientific Research 19K03443 and 23K03070}
%\dedicatory{}
\begin{abstract}
Let $R$ be a commutative noetherian ring and $I$ an ideal of $R$.
Assume that for all integers $i$ the local cohomology module $\h_I^i(R)$ is $I$-cofinite.
Suppose that $R_\p$ is a regular local ring for all prime ideals $\p$ that do not contain $I$.
In this paper, we prove that if the $I$-cofinite modules forms an abelian category, then for all finitely generated $R$-modules $M$ and all integers $i$, the local cohomology module $\h_I^i(M)$ is $I$-cofinite.
\end{abstract}
\maketitle
%\tableofcontents
%%%%%%%%%%%%%%%%%%%%%%%%%%%%%%%%%%%
\section{Introduction}

Let $R$ be a commutative noetherian ring and $I$ an ideal of $R$.
An {\em $I$-cofinite} $R$-module is by definition an $R$-module $X$ that satisfies both of the following two conditions (a) and (b).
\begin{enumerate}[\qquad(a)]
\item
$\supp X$ is contained in $\V(I)$.
\item
$\Ext_R^i(R/I,X)$ is finitely generated for all integers $i$.
\end{enumerate}
Hartshorne \cite{H} introduced the notion of an $I$-cofinite module, and constructed an example (see Example \ref{1} stated below) where a local cohomology module $\h_I^i(M)$ is not $I$-cofinite, which is a counterexample to a conjecture of Grothendieck \cite{G}.
Since then, so many people have worked on the question asking when $\h_I^i(M)$ is $I$-cofinite, and so many results on it have been obtained; see for example \cite{B} and references therein.

Denote by $\Mod R$ the category of $R$-modules and by $\cof_I(R)$ the full subcategory of $\Mod R$ consisting of $I$-cofinite $R$-modules.
After proving results on the relationship between the categorical structure of $\cof_I(R)$ and the cofiniteness of local cohomology modules, Bahmanpour \cite{B} posed the following question.

\begin{ques}[Bahmanpour]\label{17}
Suppose that $\cof_I(R)$ is an abelian subcategory of $\Mod R$.
Is then $\h_I^i(M)$ an $I$-cofinite $R$-module for all finitely generated $R$-modules $M$ and all integers $i$\,?
\end{ques}

The purpose of this paper is to provide a couple of answers to Question \ref{17} mainly by means of techniques of subcategories of modules.
Denote by $\cof_I^0(R)$ the full subcategory of $\Mod R$ consisting of $R$-modules $X$ satisfying the above condition (b) only; such modules are called {\em $I$-ETH-cofinite} and investigated, see \cite{AB} for example. 
Note that for an $R$-module $M$ and an integer $i$ there are equivalences
$$
\text{$\h_I^i(M)$ is $I$-cofinite}
\ \iff\ \h_I^i(M)\in\cof_I(R)
\ \iff\ \h_I^i(M)\in\cof_I^0(R).
$$
The main result of this paper is the following theorem.

\begin{thm}\label{3}
Assume that one of the following three conditions is satisfied.
\begin{enumerate}[\qquad\rm(1)]
\item
$\cof_I^0(R)$ is abelian.
\item
$\cof_I(R)$ is Serre, and $\h_I^i(R)$ is $I$-cofinite for any integer $i$.
\item
$\cof_I(R)$ is abelian, $\h_I^i(R)$ is $I$-cofinite for any integer $i$, and $\sing R$ is contained in $\V(I)$.
\end{enumerate}
Then $\h_I^i(M)$ is $I$-cofinite for any finitely generated $R$-module $M$ and any integer $i$.
\end{thm}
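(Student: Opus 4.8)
The plan is to study the full subcategory $\X$ of $\mod R$ consisting of the finitely generated modules $M$ with $\h_I^i(M)$ $I$-cofinite --- equivalently, by the equivalences recorded above, $I$-ETH-cofinite --- for every $i$, and to prove that $\X=\mod R$. The first observation is that, in all three cases, $\X$ is closed under extensions. Indeed, for a short exact sequence $0\to M'\to M\to M''\to 0$ with $M',M''\in\X$, the long exact sequence of local cohomology exhibits $\h_I^i(M)$ as an extension of $\ker\bigl(\h_I^i(M'')\to\h_I^{i+1}(M')\bigr)$ by $\cok\bigl(\h_I^{i-1}(M'')\to\h_I^i(M')\bigr)$, where the two displayed maps are connecting homomorphisms and hence are morphisms between objects of $\cof_I^0(R)$ (resp.\ of $\cof_I(R)$). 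Since that category is abelian in cases (1) and (3), and Serre --- in particular abelian --- in case (2), both of these (co)kernels are $I$-cofinite, and as $\cof_I^0(R)$ is always closed under extensions we conclude $\h_I^i(M)$ is $I$-cofinite, i.e.\ $M\in\X$.

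Granting that $\X$ is closed under extensions, a prime filtration reduces the problem to proving $R/\p\in\X$ for every $\p\in\spec R$. If $\p\supseteq I$ this is clear, since then $\h_I^0(R/\p)=R/\p$ is finitely generated with support in $\V(I)$ and $\h_I^i(R/\p)=0$ for $i>0$. The essential case $\p\not\supseteq I$ I would treat by induction on $\dim R/\p$; the base case $\dim R/\p=0$ is again clear because then $I+\p$ is the unit ideal and all $\h_I^i(R/\p)$ vanish. For the inductive step, after an elementary modification of a generating set of $I$ we may assume $I=(x_1,\dots,x_n)$ with each $x_j\notin\p$. Fixing $x=x_j$, multiplication by $x$ is injective on the domain $R/\p$, so we have a short exact sequence $0\to R/\p\xrightarrow{x}R/\p\to R/(\p+xR)\to 0$ with $\dim R/(\p+xR)<\dim R/\p$; by the inductive hypothesis together with closure under extensions, $R/(\p+xR)\in\X$. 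Reading off the long exact sequence, multiplication by $x$ on $\h_I^i(R/\p)$ then has kernel a homomorphic image of the $I$-cofinite module $\h_I^{i-1}(R/(\p+xR))$ and cokernel a submodule of the $I$-cofinite module $\h_I^i(R/(\p+xR))$.

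In case (2), $\cof_I(R)$ being Serre forces these kernels and cokernels to be $I$-cofinite, and --- ranging over $x_1,\dots,x_n$ --- a cofiniteness criterion of Melkersson for $I$-torsion modules then yields that $\h_I^i(R/\p)$ is $I$-cofinite, completing the induction. In cases (1) and (3) only abelianness is available, and here the argument must be reorganized around a secondary induction on the cohomological degree $i$, using repeatedly that the kernel and cokernel of ``multiplication by $x$'' on an \emph{already-known-to-be-$I$-cofinite} local cohomology module are $I$-cofinite --- which is exactly abelianness of $\cof_I(R)$, resp.\ $\cof_I^0(R)$, applied to that morphism --- and feeding this back through the long exact sequence. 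In case (1) the hypothesis that $\cof_I^0(R)$, which contains every finitely generated module, is abelian is what makes this propagation work. In case (3) one also invokes $\sing R\subseteq\V(I)$: this forces $\ipd(M)\subseteq\V(I)$ for every finitely generated $M$, so a sufficiently high syzygy of $R/\p$ is locally free off $\V(I)$; combined with $\h_I^\bullet(R)$ being $I$-cofinite --- which with abelianness makes $\X$ a \emph{resolving} subcategory of $\mod R$ --- and with large restricted flat dimension techniques, this furnishes the input the bare dimension induction is missing at the primes $\p$ of maximal dimension (for instance the minimal primes not containing $I$), where no lower-dimensional quotient is available.

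The step I expect to be the genuine obstacle is precisely the passage from ``abelian'' to ``Serre''. A Serre subcategory is closed under sub- and quotient modules, which is exactly what makes case (2) run, whereas an abelian subcategory need not be; so in cases (1) and (3) the sub- and quotient modules of $I$-cofinite modules produced by the long exact sequence cannot be recognized as $I$-cofinite for free, and bridging this gap --- by the secondary degree induction together with the ETH-hypothesis in case (1), and with the singular-locus and resolving-subcategory machinery in case (3) --- is where the real work lies.
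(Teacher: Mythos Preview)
Your outline correctly locates the obstacle in cases (1) and (3), but the bridge you propose for case~(1) does not close. To apply Melkersson's criterion to $M=\h_I^i(R/\p)$ you need both $(0:_M x)$ and $M/xM$ in $\cof_I^0(R)$; abelianness identifies the first as a cokernel of a map out of $\h_I^{i-1}(R/\p)$ and the second as a kernel of a map into $\h_I^{i+1}(R/\p)$, so an ascending induction on $i$ supplies one input while a descending induction supplies the other, and the two cannot be interleaved once $\dim R/\p\ge2$ (already at $\dim R/\p=2$ you are left with $\h_I^1$ and $\h_I^2$ each blocking the other). The containment $\mod R\subseteq\cof_I^0(R)$ does not break this circularity, since none of the local cohomology modules in play is finitely generated. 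The paper abandons the prime filtration here and instead inducts on the number $n$ of chosen generators $x_1,\dots,x_n\in I$, using the Mayer--Vietoris sequence for the ideals $(x_1,\dots,x_{n-1})$ and $(x_n)$ to express $\h^i_{(x_1,\dots,x_n)}(M)$ as an extension of a kernel by a cokernel of maps between modules $\h^j_J(M)$ with $J\subseteq I$ generated by fewer elements; those lie in $\cof_I^0(R)$ by induction, and abelianness plus extension-closure finishes. The base $n\le1$ uses the \v{C}ech complex together with $\Ext_R^*(R/I,M_x)=0$.

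For case~(3) your ingredients are exactly the paper's, but the dimension induction is scaffolding you should discard. Once $\c_I(R)$ is thick (your first paragraph) and contains $R$, it is resolving; since trivially $\supp^{-1}(\V(I))\cap\mod R\subseteq\c_I(R)$, the identity $\nf^{-1}(\V(I))=\res\bigl(\supp^{-1}(\V(I))\cap\mod R\bigr)$ from \cite[Theorem~4.1(1)]{kos} gives $\nf^{-1}(\V(I))\subseteq\c_I(R)$. For arbitrary $M$ the hypothesis $\sing R\subseteq\V(I)$ forces $\ipd(M)\subseteq\V(I)$, and since $\ipd(M)=\nf(\syz^rM)$ with $r=\rfd_RM$, the syzygy $\syz^rM$ lies in $\c_I(R)$; thickness then descends to $M$. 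No induction on $\dim R/\p$ enters, and no primes need separate treatment. Your case~(2) argument is essentially correct and more self-contained than the paper's, which simply invokes \cite[Theorem~2.3]{B0}.
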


\noindent
In fact, the singular locus condition in Theorem \ref{3}(3) can be removed, if a suitable assumption is imposed on the finitely generated $R$-module $M$, as follows.

\begin{thm}\label{16}
Suppose that $\cof_I(R)$ is abelian and that $\h_I^i(R)$ is $I$-cofinite for all $i\in\Z$.
Let $M$ be a finitely generated $R$-module such that $\pd_{R_\p}M_\p<\infty$ for all $\p\in\D(I)$.
Then $\h_I^i(M)$ is $I$-cofinite for all $i\in\Z$.
\end{thm}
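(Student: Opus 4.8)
The plan is to show that the finitely generated modules whose local cohomology with respect to $I$ is everywhere $I$-cofinite constitute a thick subcategory of $\mod R$ containing $R$ and all $I$-torsion modules, and then to place $M$ inside it by means of the hypothesis on projective dimensions.

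First I would put $\X=\{N\in\mod R\mid\h_I^i(N)\text{ is }I\text{-cofinite for all }i\in\Z\}$ and prove that $\X$ is closed under direct summands and satisfies the two-out-of-three property for short exact sequences $0\to A\to B\to C\to 0$. The long exact sequence of local cohomology shows that each $\h_I^i$ of the remaining module fits into a short exact sequence $0\to K'\to\h_I^i(-)\to K''\to 0$ in which $K'$ is a cokernel and $K''$ a kernel of $R$-homomorphisms between local cohomology modules of $A$, $B$, and $C$; since $\cof_I(R)$ is always closed under extensions and direct summands over a noetherian ring, and by hypothesis is abelian (hence closed under kernels and cokernels), it follows that if two of $A,B,C$ lie in $\X$ then so does the third. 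Thus $\X$ is a thick — in particular resolving — subcategory of $\mod R$. Since $\h_I^i(R)$ is $I$-cofinite we have $R\in\X$, hence every finite free module lies in $\X$; and every finitely generated $I$-torsion module $N$ is $I$-cofinite with $\h_I^0(N)=N$ and $\h_I^i(N)=0$ for $i>0$, so $N\in\X$ as well. Hence $\X$ contains the smallest thick subcategory $\mathcal{T}$ of $\mod R$ that contains $R$ and all finitely generated $I$-torsion modules, and it suffices to prove $M\in\mathcal{T}$.

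Next I would reduce to one syzygy. Put $c=\rfd_R M$, which is finite; since $\pd_{R_\p}M_\p<\infty$ for every $\p\in\D(I)$ we have $\pd_{R_\p}M_\p\le c$ there, so $L:=\syz^c M$ is free at each $\p\in\D(I)$, i.e.\ $\nf(L)\subseteq\V(I)$. As $\mathcal{T}$ is thick, the exact sequences $0\to\syz^{k}M\to F_{k-1}\to\syz^{k-1}M\to 0$, with $F_{k-1}$ free (hence in $\mathcal{T}$), show that $L\in\mathcal{T}$ implies $M\in\mathcal{T}$, so it is enough to prove $L\in\mathcal{T}$. For this I would use biduality: the canonical map $L\to L^{\ast\ast}$ is an isomorphism after localization at any $\p\in\D(I)$, so its kernel and cokernel are finitely generated and supported in $\V(I)$, hence belong to $\mathcal{T}$; consequently $L\in\mathcal{T}$ provided $L^{\ast\ast}\in\mathcal{T}$. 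The problem is thereby reduced to showing that a reflexive module which is free on $\D(I)$ — equivalently (on dualizing a finite presentation of $L^{\ast}$) a second syzygy of a module $P$ with $\pd_{R_\p}P_\p\le 2$ for all $\p\in\D(I)$ — lies in $\mathcal{T}$.

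The hard part will be precisely this last assertion: that a finitely generated module which is free, or merely of finite projective dimension, away from $\V(I)$ lies in the thick subcategory generated by $R$ and the finitely generated $I$-torsion modules. I expect its proof to be the technical core, drawing on the structure theory of resolving subcategories via their nonfree loci, the finiteness of the large restricted flat dimension (the source of the uniform bound used above to replace $M$ by a single free-on-$\D(I)$ syzygy), and repeated biduality to strip off $I$-torsion kernels and cokernels until only free modules remain. Granting it, $L^{\ast\ast}\in\mathcal{T}$, hence $L\in\mathcal{T}\subseteq\X$, and climbing back up the syzygies — again using that $\X$ is thick — yields $M\in\X$, i.e.\ $\h_I^i(M)$ is $I$-cofinite for all $i$. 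I would also point out that if $\sing R\subseteq\V(I)$, then $\ipd M\subseteq\sing R\subseteq\V(I)$ automatically for every finitely generated $M$, so that Theorem \ref{3}(3) is exactly the special case of Theorem \ref{16} in which the hypothesis on $M$ holds vacuously.
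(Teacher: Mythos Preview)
Your overall architecture is exactly the paper's: set $\X=\c_I(R)$, show it is thick (Proposition~4.2), contains $R$ and all finitely generated $I$-torsion modules (Lemma~4.4(2)), replace $M$ by $\syz^{r}M$ with $r=\rfd_RM$ so that $\nf(\syz^rM)\subseteq\V(I)$ (Lemma~4.4(1)), and then descend back from the syzygy to $M$ using thickness. The paper does nothing beyond this; Theorem~1.3 is literally recorded as ``a direct consequence of Theorem~4.6(1)''.

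The one substantive discrepancy is your ``hard part''. You are right that the crux is the inclusion
\[
\nf^{-1}(\V(I))\ \subseteq\ \text{(thick/resolving closure of the finitely generated $I$-torsion modules and $R$)},
\]
but the paper does not prove this and does not use biduality. It simply invokes an external classification result, \cite[Theorem~4.1(1)]{kos}, which gives the equality $\nf^{-1}(\V(I))=\res\bigl(\supp^{-1}(\V(I))\cap\mod R\bigr)$; since $\c_I(R)$ is resolving and contains $\supp^{-1}(\V(I))\cap\mod R$, the needed inclusion follows. Your sketched approach of ``repeated biduality to strip off $I$-torsion kernels and cokernels until only free modules remain'' is not how that theorem is proved, and as stated it is not a terminating procedure: passing from $L$ to $L^{\ast\ast}$ leaves you with another module that is merely free on $\D(I)$, so you have not reduced the problem. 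If you want to avoid the citation, you would need the Koszul-homology or pushforward techniques of \cite{kos}, not biduality alone. Your final remark, that Theorem~1.2(3) is the special case where $\sing R\subseteq\V(I)$ forces $\ipd(M)\subseteq\V(I)$ for every $M$, is exactly how the paper derives Theorem~4.6(2) from~4.6(1).
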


The organization of this paper is as follows.
In Section 2, we recall the precise definition of an $I$-cofinite module, a theorem of Bahmanpour \cite{B} and Hartshorne's celebrated example.
In Section 3, we first recall the definitions of Serre, abelian and thick subcategories, and then investigate the structures of the subcategories $\cof_I(R),\,\cof_I^0(R)$ of $\Mod R$.
Cases (1) and (2) of Theorem \ref{3} are proved in this section; we apply a Mayer--Vietoris sequence and a result shown in \cite{B0}, respectively.
In Section 4, using the results obtained so far, we explore the structure of the full subcategory consisting of finitely generated modules with $I$-cofinite local cohomology modules.
The proofs of case (3) of Theorem \ref{3} and Theorem \ref{16} are given in this section.
The main methods are the notions of resolving subcategories, large restricted flat dimension, nonfree loci and infinite projective dimension loci.

%%%%%%%%%%%%%%%%%%%%%%%%
\section{Preliminaries}

This short section is to give preliminaries for the later sections.
First of all, we state our convention.

\begin{conv}
Throughout this paper, let $R$ be a commutative noetherian ring and $I$ an ideal of $R$.
All subcategories that are considered in this paper are assumed to be strictly full.
We denote by $\Mod R$ the category of $R$-modules and by $\mod R$ the subcategory of $\Mod R$ consisting of finitely generated $R$-modules.
\end{conv}

Let us recall the definition of a cofinite module, whose notion is a main subject of this paper.

\begin{dfn} 
An $R$-module $M$ is called {\em $I$-cofinite} if the support $\supp M$ of $M$ is contained in $\V(I)$ (or in other words, $M_\p=0$ for each prime ideal $\p$ of $R$ that does not contain $I$) and $\Ext_R^i(R/I, M)$ is a finitely generated $R$-module for all integers $i$ (or equivalently, for all non-negative integers $i$).
We denote by $\cof_I(R)$ the subcategory of $\Mod R$ consisting of $I$-cofinite $R$-modules.
\end{dfn}

The $I$-cofiniteness of a local cohomology module $\h_I^i(M)$ has long been a main topic in the studies of local cohomology.
Bahmanpour \cite[Theorem 3]{B} relates it with the structure of the subcategory $\cof_I(R)$ of $\Mod R$.

\begin{thm}[Bahmanpour]
Consider the following two conditions.
\begin{enumerate}[\quad\rm(i)]
\item
$\h_I^i(M)$ is $I$-cofinite for all finitely generated $R$-modules $M$ and all integers $i$.
\item
$\cof_I(R)$ is an abelian subcategory of $\Mod R$.
\end{enumerate}
If the ring $R$ is semi-local, then {\rm(i)} implies {\rm(ii)}.
\end{thm}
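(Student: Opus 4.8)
The plan is to reduce the statement that $\cof_I(R)$ is an abelian subcategory of $\Mod R$ to the statement that $\cof_I(R)$ is closed under submodules, and then to deduce the latter from hypothesis (i) by induction on $\dim(R/I)$.

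\textbf{Reduction to closure under submodules.} Recall that $\cof_I(R)$, which is plainly closed under finite direct sums, is an abelian subcategory of $\Mod R$ if and only if it is closed under the kernels and cokernels of its morphisms. First one checks that $\cof_I(R)$ satisfies ``two out of three'' along short exact sequences: in an exact sequence $0\to A\to B\to C\to 0$, if two of $A,B,C$ are $I$-cofinite then so is the third. The condition $\supp(-)\subseteq\V(I)$, which over the noetherian ring $R$ amounts to being $I$-torsion, is stable under subobjects, quotients and extensions; and finite generation of all of $\Ext_R^i(R/I,-)$ is propagated by the long exact $\Ext$-sequence, because finite generation passes to subobjects and quotients of a finitely generated module. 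Granting this, let $f\colon X\to Y$ be a morphism in $\cof_I(R)$ with image $L$. If $\cof_I(R)$ is closed under submodules, then $L\subseteq Y$ lies in $\cof_I(R)$, and applying two out of three to $0\to\ker f\to X\to L\to 0$ and to $0\to L\to Y\to\cok f\to 0$ puts $\ker f$ and $\cok f$ in $\cof_I(R)$. So it is enough to prove, under hypothesis (i), that every submodule $L$ of an $I$-cofinite module $X$ is $I$-cofinite, or equivalently (once more by two out of three) that $X/L$ is.

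\textbf{The induction.} I would induct on $d=\dim(R/I)$, noting that an $I$-cofinite module is supported in $\V(I)$ and hence has dimension at most $d$. One may first replace $R$ by its completion at the Jacobson radical, which is faithfully flat over $R$ and, $R$ being semi-local, a finite product of complete local rings, and which preserves hypothesis (i) as well as the relevant cofiniteness, by flat base change for the finitely generated modules $R/I$. It is also worth noting that finite generation of $\Hom_R(R/I,X)$ already forces $\operatorname{Ass}_R X=\operatorname{Ass}_R\Hom_R(R/I,X)$ to be finite with finite Bass numbers, so that $X$ embeds essentially into a finite direct sum of indecomposable injectives $E_R(R/\p)$ with $\p\supseteq I$; this rigidity is useful in the induction. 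If $d=0$, then $I$ is $\m$-primary up to radical, so the $I$-cofinite modules coincide with the $\m$-cofinite ones, which by Melkersson's theorem are precisely the artinian $R$-modules; these form a Serre subcategory of $\Mod R$, so the desired closure holds, and hypothesis (i) is not yet needed.

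\textbf{The inductive step, and the main obstacle.} For $d\ge 1$ choose, by prime avoidance, an element $x\in\m$ lying outside every minimal prime of $I$, and set $J=I+xR$, so that $\V(J)\subseteq\V(I)$ and $\dim(R/J)=d-1$. Given an $I$-cofinite module $X$ and a submodule $L$, multiplication by $x$ on $X/L$ produces short exact sequences involving $\Hom_R(R/xR,X/L)$ and $(X/L)/x(X/L)$, each of which is annihilated by a power of $J$; comparing $\Ext_R^\bullet(R/J,-)$ with $\Ext_R^\bullet(R/I,-)$ via the Koszul complex on $x$ reduces the $I$-cofiniteness of $X/L$ to the $J$-cofiniteness of the corresponding subquotients of $X$, where the inductive hypothesis over $J$ applies. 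The place at which hypothesis (i) becomes indispensable is exactly the verification that these auxiliary subquotients of $X$ are $J$-cofinite: $I$-cofiniteness of $X$ does not force this by itself, and (i) is precisely what keeps the local cohomology modules arising along the induction cofinite; indeed, without such a hypothesis $\cof_I(R)$ can already fail to be closed under submodules once $d\ge 2$. I expect this last point to be the heart of the proof. Throughout, the support half of $I$-cofiniteness is automatic, so the entire difficulty is concentrated in the finite generation of the relevant $\Ext_R^\bullet(R/I,-)$.
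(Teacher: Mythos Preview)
The paper does not prove this theorem at all: it is quoted verbatim as Bahmanpour's result and attributed to \cite[Theorem 3]{B}, with no argument given. There is therefore no ``paper's own proof'' to compare your proposal against; the comparison you were asked for is vacuous here.

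That said, your proposal has a genuine gap in the inductive step. Your reduction to closure under submodules is fine, and the base case $d=0$ is correct. But to run the induction you need the inductive hypothesis to apply with $J=I+xR$ in place of $I$, and that hypothesis is: \emph{$\h_J^i(M)$ is $J$-cofinite for every finitely generated $M$ and every $i$}. You never explain why this follows from the corresponding statement for $I$; it does not follow formally, and the relationship between $\h_I^\bullet$ and $\h_J^\bullet$ via the Koszul complex on $x$ goes the wrong way for this purpose. You acknowledge that ``this last point'' is the heart of the proof and leave it unresolved, but as written the induction simply does not close. Moreover, the auxiliary modules $\Hom_R(R/xR,X/L)$ and $(X/L)/x(X/L)$ are not finitely generated, so hypothesis~(i) --- which concerns $\h_I^i$ of \emph{finitely generated} modules --- does not apply to them directly; you would need some further device (e.g.\ Melkersson-type criteria or a passage through artinian quotients) to connect them to the hypothesis, and none is supplied. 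Finally, the role of semi-locality is reduced in your sketch to permitting completion, but you do not actually use completeness thereafter, so the hypothesis is effectively idle in your argument as it stands.
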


This theorem naturally leads us to ask whether or not (ii) implies (i), which is none other than Question \ref{17} mentioned in the previous section.

Finally, we recall a celebrated example constructed by Hartshorne \cite{H}, and several properties it satisfies.
Note that this example supports Question \ref{17}.

\begin{ex}[Hartshorne]\label{1}
Let $k$ be a field, and let $R=k[x,y][\![u,v]\!]$ be a formal power series ring over a polynomial ring.
Consider the ideal $I=(u,v)$ of $R$ and the finitely generated $R$-module $M=R/(xu+yv)$.
It is shown in \cite[\S3]{H} that the following statements hold.
\begin{enumerate}[\quad(a)]
\item
It holds that $\h_I^i(R)=0$ for all $i\ne2$.
\item
One has $\Hom_R(R/I,\h_I^2(R))\cong R/I$ and $\Ext_R^i(R/I,\h_I^2(R))=0$ for all $i>0$.
\item
The $R$-module $\Hom_R(R/I,\h_I^2(M))$ is not finitely generated.
\item
There is an exact sequence $\h_I^2(R)\xrightarrow{xu+yv}\h_I^2(R)\to\h_I^2(M)\to0$.
\end{enumerate}
Since the support of $\h_I^i(R)$ is contained in $\V(I)$, it follows from (a) and (b) that $\h_I^i(R)$ is $I$-cofinite for all $i$.
The module $\h_I^2(M)$ is not $I$-cofinite by (c).
Hence the subcategory $\cof_I(R)$ of $\Mod R$ is not abelian by (d).
\end{ex}

%%%%%%%%%%%%%%%%%%%%%%%%%%%%%%%%%%%%%%%
\section{The subcategories $\cof_I(R)$ and $\cof_I^0(R)$ of $\Mod R$}

In this section, we give the definitions of the subcategories $\cof_I(R)$ and $\cof_I^0(R)$ of $\Mod R$ which are related to cofiniteness, and investigate the structures of those subcategories.
We start by recalling the definitions of Serre, abelian and thick subcategories of an abelian category, together with their fundamental properties.

\begin{dfn}
Let $\A$ be an abelian category.
Let $\X$ be a subcategory of $\A$.
\begin{enumerate}[(1)]
\item
We say that $\X$ is a {\em Serre subcategory} of $\A$ if it is closed under subobjects, quotient objects and extensions.
\item
We say that $\X$ is an {\em abelian subcategory} of $\A$ if $\X$ inherits the abelian structure from $\A$, namely, if $\X$ is an abelian category with respect to the abelian structure of $\A$.
\item
We say that $\X$ is a {\em thick subcategory} of $\A$ if it is closed under direct summands, extensions, kernels of epimorphisms and cokernels of monomorphisms.
\end{enumerate}
\end{dfn}

\begin{rem}\label{9}
Let $\A$ be an abelian category, and let $\X$ be a subcategory of $\A$.
\begin{enumerate}[(1)]
\item
The subcategory $\X$ is Serre if and only if for a short exact sequence $0\to L\to M\to N\to0$ in $\A$, one has $M\in\X$ if and only if $L,N\in\X$.
\item
The subcategory $\X$ is abelian if and only if it is closed under kernels and cokernels.
\item
Assume that $\X$ is closed under direct summands.
Then $\X$ is a thick subcategory if and only if $\X$ satisfies the two-out-of-three property, that is, $\X$ is such that for a short exact sequence $0\to L\to M\to N\to0$ in $\A$, if two of $L,M,N$ are in $\X$, then so is the third.
\item
Suppose that $\X$ is an abelian subcategory closed under extensions (this is equivalent to supposing that $\X$ is a wide subcategory; see Remark \ref{7} stated below).
Then $\X$ is a thick subcategory.
Indeed, let $M,N\in\A$.
Splicing the split short exact sequences $0\to N\to M\oplus N\to M\to0$ and $0\to M\to M\oplus N\to N\to0$, we get an exact sequence $0\to N\to M\oplus N\to M\oplus N\to N\to0$.
It is easy to observe from this exact sequence that if $\X$ is closed under kernels or cokernels, then it is closed under direct summands.
\item
Suppose that $\X$ is a Serre subcategory.
Then it is obvious that $\X$ is an abelian subcategory.
It follows from (4) that $\X$ is a thick subcategory as well.
\end{enumerate}
\end{rem}

We introduce two subcategories of $\Mod R$ and give some basic properties.

\begin{dfn}
\begin{enumerate}[(1)]
\item
We denote by $\cof_I^0(R)$ the subcategory of $\Mod R$ consisting of modules $M$ such that $\Ext_R^i(R/I, M)$ is finite generated for all integers $i$.
\item
For a subset $\Phi$ of $\spec R$ we denote by $\supp^{-1}\Phi$ the subcategory of $\Mod R$ consisting of modules $M$ such that $\supp M$ is contained in $\Phi$.
\end{enumerate}
\end{dfn}

\begin{rem}\label{8}
\begin{enumerate}[(1)]
\item
Let $\Phi$ be a subset of $\spec R$.
Then $\supp^{-1}\Phi$ is a Serre subcategory of $\Mod R$.
This is an immediate consequence of the fact that for a short exact sequence $0\to L\to M\to N\to0$ of $R$-modules one has $\supp M=\supp L\cup\supp N$.
\item
Clearly, $\cof_I^0(R)$ contains $\mod R$.
It is easy to verify that one has $\cof_I(R)=\cof_I^0(R)\cap\supp^{-1}\V(I)$.
\end{enumerate}
\end{rem}

Here we state a property $\cof_I(R)$ and $\cof_I^0(R)$ share, which is used several times later.

\begin{prop}\label{2}
Both $\cof_I(R)$ and $\cof_I^0(R)$ are thick subcategories of $\Mod R$.
\end{prop}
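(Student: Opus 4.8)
The plan is to establish thickness of both subcategories by verifying the characterization from Remark \ref{9}. Since $\cof_I^0(R)$ contains $\mod R$, it contains all finitely generated modules, hence all of $R$, so it is nonzero; the same will follow for $\cof_I(R)$ once we know it is a reasonable subcategory (e.g.\ it contains $R/I$). The crucial input is the long exact sequence of $\Ext$ modules. Given a short exact sequence $0\to L\to M\to N\to 0$ of $R$-modules, applying $\Hom_R(R/I,-)$ yields a long exact sequence
$$
\cdots\to\Ext_R^i(R/I,L)\to\Ext_R^i(R/I,M)\to\Ext_R^i(R/I,N)\to\Ext_R^{i+1}(R/I,L)\to\cdots.
$$
The category of finitely generated $R$-modules is itself a Serre (hence abelian and thick) subcategory of $\Mod R$, because $R$ is noetherian. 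So in this long exact sequence, if any two of the three families $\{\Ext_R^i(R/I,L)\}_i$, $\{\Ext_R^i(R/I,M)\}_i$, $\{\Ext_R^i(R/I,N)\}_i$ consist of finitely generated modules, then the third family does too: indeed one writes each $\Ext_R^i(R/I,-)$ of the remaining module as an extension of a submodule of a finitely generated module by a quotient of a finitely generated module (reading off the long exact sequence), and finite generation propagates through such extensions. This shows $\cof_I^0(R)$ satisfies the two-out-of-three property.

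First I would observe that $\cof_I^0(R)$ is closed under direct summands: if $M=M_1\oplus M_2$ lies in $\cof_I^0(R)$, then $\Ext_R^i(R/I,M)\cong\Ext_R^i(R/I,M_1)\oplus\Ext_R^i(R/I,M_2)$, and a direct summand of a finitely generated module over a noetherian ring is finitely generated, so each $M_j\in\cof_I^0(R)$. Combined with the two-out-of-three property just established, Remark \ref{9}(3) gives that $\cof_I^0(R)$ is thick. For $\cof_I(R)$, recall from Remark \ref{8}(2) that $\cof_I(R)=\cof_I^0(R)\cap\supp^{-1}\V(I)$, and from Remark \ref{8}(1) that $\supp^{-1}\V(I)$ is a Serre subcategory of $\Mod R$, hence in particular thick. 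An intersection of two thick subcategories is thick: each of the defining closure conditions (direct summands, extensions, kernels of epimorphisms, cokernels of monomorphisms) is preserved under intersection, since if both ambient subcategories are closed under an operation then so is their intersection. Therefore $\cof_I(R)$ is thick as well.

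I do not expect any serious obstacle here; the argument is essentially bookkeeping with the $\Ext$ long exact sequence, and every ingredient (noetherianity giving that $\mod R$ is thick, the long exact sequence, the behavior of support in short exact sequences) is standard. The one point deserving a moment's care is the precise propagation of finite generation through the long exact sequence in the two-out-of-three argument — one should make explicit which module is being controlled and invoke that $\mod R$ is closed under submodules, quotients, and extensions — but this is routine. A minor alternative that avoids even Remark \ref{9}(3) would be to note that $\cof_I^0(R)$ is visibly closed under finite direct sums, kernels of epimorphisms and cokernels of monomorphisms directly from the long exact sequence, and closed under extensions likewise, which is exactly the definition of thick; either route works and neither is hard.
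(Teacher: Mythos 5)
Your proof is correct and follows essentially the same route as the paper's: closure under direct summands via the decomposition of $\Ext_R^i(R/I,-)$, the two-out-of-three property for $\cof_I^0(R)$ read off from the long exact sequence of $\Ext$ together with noetherianity, and thickness of $\cof_I(R)$ obtained as the intersection $\cof_I^0(R)\cap\supp^{-1}\V(I)$ with the Serre (hence thick) subcategory $\supp^{-1}\V(I)$. The only cosmetic difference is that you package the argument through Remark \ref{9}(3), while the paper checks the individual closure conditions directly, which you also note as an alternative.
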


\begin{proof}
Let $X$ be an $R$-module in $\cof_I^0(R)$ and $Y$ is a direct summand of $X$.
Then for each integer $i$ the $R$-module $\Ext_R^i(R/I,X)$ is finitely generated, and hence so is its direct summand $\Ext_R^i(R/I,Y)$.
It follows that $\cof_I^0(R)$ is closed under direct summands.

Let $0\to L\to M\to N\to0$ be a short exact sequence of $R$-modules.
Then there exists an exact sequence
$$
\Ext_R^{i-1}(R/I,N)\to\Ext_R^i(R/I,L)\to\Ext_R^i(R/I,M)\to\Ext_R^i(R/I,N)\to\Ext_R^{i+1}(R/I,L)
$$
for all integers $i$.
Suppose that $L,M$ belong to $\cof_I^0(R)$.
Then $\Ext_R^i(R/I,M)$ and $\Ext_R^{i+1}(R/I,L)$ are finitely generated for all $i$, and the above exact sequence shows that $\Ext_R^i(R/I,N)$ is also finitely generated for all $i$.
Hence $N$ belongs to $\cof_I^0(R)$.
This shows that $\cof_I^0(R)$ is closed under cokernels of monomorphisms.
In a similar way, one can check that $\cof_I^0(R)$ is closed under kernels of epimorphisms and extensions.

We now conclude that $\cof_I^0(R)$ is a thick subcategory of $\Mod R$.
Combining this with Remarks \ref{9}(5) and \ref{8}, we observe that $\cof_I(R)$ is a thick subcategory of $\Mod R$ as well.
\end{proof}

\begin{rem}\label{7}
A {\em wide subcategory} of an abelian category is by definition a subcategory closed under kernels, cokernels and extensions.
So far, a lot of works have been done on wide subcategories; see \cite{BM,E,HJV,Ho,K,MS,ncoh} for instance.
It follows from Proposition \ref{2} that $\cof_I(R)$ is abelian if and only if it is wide.
\end{rem}

The proposition below is shown by using a theorem given in \cite{B0}.
It says that Question \ref{17} has an affirmative answer if the assumption of abelianity in the question is replaced with the stronger assumption of Serreness and the assumption that $\h_I^i(R)$ is $I$-cofinite for all $i$ is added. 

\begin{prop}\label{11}
Suppose that $\cof_I(R)$ is a Serre subcategory of $\Mod R$ and that $\h_I^i(R)$ is $I$-cofinite for all integers $i$.
Then $\h_I^i(M)$ is $I$-cofinite for all integers $i$ and all finitely generated $R$-modules $M$.
\end{prop}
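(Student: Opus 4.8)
The plan is to prove the statement by descending induction on the cohomological degree $i$. Since $I$ is finitely generated, say $I=(x_1,\dots,x_c)$, the extended \v{C}ech complex on $x_1,\dots,x_c$ computes $\h_I^{\bullet}(N)$ and is concentrated in cohomological degrees $0,\dots,c$; hence $\h_I^n(N)=0$ for every $R$-module $N$ and every $n>c$, and of course $\h_I^n(N)=0$ for $n<0$. Thus for $n>c$ or $n<0$ the module $\h_I^n(M)$ is trivially $I$-cofinite for every finitely generated $M$, which serves as the base of the induction, and it remains to carry out the inductive step: pass from degree $i+1$ to degree $i$.

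For the inductive step I would assume that $\h_I^{i+1}(M')$ is $I$-cofinite for every finitely generated $R$-module $M'$, fix a finitely generated $R$-module $M$, choose a surjection $R^{\oplus n}\twoheadrightarrow M$, and let $N$ be its kernel; since $R$ is noetherian $N$ is again finitely generated, so the induction hypothesis applies to $N$. The associated long exact sequence contains the exact piece
$$
\h_I^i(R^{\oplus n})\xrightarrow{\alpha}\h_I^i(M)\xrightarrow{\partial}\h_I^{i+1}(N)\xrightarrow{\beta}\h_I^{i+1}(R^{\oplus n}).
$$
The outer terms are $I$-cofinite: $\h_I^i(R^{\oplus n})\cong\h_I^i(R)^{\oplus n}$ is $I$-cofinite because $\h_I^i(R)$ is so by hypothesis and $\cof_I(R)$, being Serre, is closed under extensions and hence under finite direct sums, while $\h_I^{i+1}(N)$ is $I$-cofinite by the induction hypothesis. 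Therefore $\operatorname{im}\alpha$, a quotient of $\h_I^i(R)^{\oplus n}$, and $\operatorname{im}\partial=\ker\beta$, a submodule of $\h_I^{i+1}(N)$, are both $I$-cofinite, since a Serre subcategory is closed under quotient objects and subobjects. Feeding the short exact sequence $0\to\operatorname{im}\alpha\to\h_I^i(M)\to\operatorname{im}\partial\to0$ into Remark \ref{9}(1) then yields that $\h_I^i(M)$ is $I$-cofinite, which closes the induction.

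I do not expect a genuine obstacle here: the argument is short, and its real content is that the conclusion is essentially repackaged from the two hypotheses. The only delicate point is the very last step, where it matters that $\cof_I(R)$ is \emph{Serre} and not merely thick or abelian---a thick subcategory need not contain an arbitrary quotient or submodule of one of its objects, nor the image of an arbitrary morphism between two of its objects, so the subquotients $\operatorname{im}\alpha$ and $\operatorname{im}\partial$ extracted from the long exact sequence could not be controlled under the weaker hypothesis. This is precisely the difficulty that the extra assumptions in Theorem \ref{3}(3) and Theorem \ref{16} are designed to overcome; alternatively, the statement can be deduced from the result of \cite{B0}.
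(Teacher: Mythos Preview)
Your proof is correct. The paper, by contrast, does not carry out any induction: it simply observes that $\supp M\subseteq\spec R=\supp R$ and invokes \cite[Theorem 2.3]{B0}, which says that if $\mathcal{S}$ is a Serre subcategory of $\Mod R$, $N$ is a finitely generated module with $\h_I^i(N)\in\mathcal{S}$ for all $i\ge0$, and $\supp M\subseteq\supp N$, then $\h_I^i(M)\in\mathcal{S}$ for all $i\ge0$; taking $N=R$ and $\mathcal{S}=\cof_I(R)$ finishes at once. Your descending-induction-plus-long-exact-sequence argument is essentially the proof \emph{of} that cited theorem specialized to $N=R$, so you have unpacked the black box and made the proposition self-contained. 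The gain is transparency---one sees exactly where closure under subobjects and quotients is used, as you stress in your final paragraph---while the paper's route keeps the proof to three lines by outsourcing the work. You yourself note the alternative via \cite{B0} at the end, so you are already aware of both approaches.
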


\begin{proof}
Let $M$ be a finitely generated $R$-module.
We have $\supp M\subseteq\spec R=\supp R$, and by assumption, $\h_I^i(R)$ belongs to $\cof_I(R)$ for all integers $i\ge0$.
Applying \cite[Theorem 2.3]{B0} to the Serre subcategory $\cof_I(R)$, we see that $\h_I^i(M)$ belongs to $\cof_I(R)$ for all integers $i\ge0$.
Thus the proposition follows.
\end{proof}

Now we state and prove the following proposition, which says that Question \ref{17} is affirmative if $\cof_I(R)$ is replaced with $\cof_I^0(R)$.
Note that if $\cof_I^0(R)$ is abelian, then so is $\cof_I(R)$ by Remark \ref{8}(2).

\begin{prop}\label{12}
Suppose that $\cof_I^0(R)$ is an abelian subcategory of $\Mod R$.
Then the $R$-module $\h_I^i(M)$ is $I$-cofinite for all finitely generated $R$-modules $M$ and all integers $i$.
\end{prop}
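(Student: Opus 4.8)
The plan is to prove, by induction on the number of generators of an ideal, the following stronger assertion: \emph{if $\cof_I^0(R)$ is abelian, then $\h_{\mathfrak c}^i(M)\in\cof_I^0(R)$ for every ideal $\mathfrak c\subseteq I$, every $M\in\mod R$, and every $i\in\Z$.} Taking $\mathfrak c=I$ (which is finitely generated since $R$ is noetherian) and using that $\supp\h_I^i(M)\subseteq\V(I)$ always holds, the identity $\cof_I(R)=\cof_I^0(R)\cap\supp^{-1}\V(I)$ from Remark \ref{8}(2) then gives $\h_I^i(M)\in\cof_I(R)$, which is the conclusion.

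The base case is $\mathfrak c=(y)$ with $y\in I$. Here $\h_{(y)}^0(M)=\Gamma_{(y)}(M)$ is a submodule of the noetherian module $M$, so it is finitely generated and thus in $\cof_I^0(R)$, while $\h_{(y)}^i(M)=0$ for $i\ge2$. For $i=1$, set $M'=M/\Gamma_{(y)}(M)$; then $y$ is a nonzerodivisor on $M'$ and $\h_{(y)}^1(M)\cong M'_y/M'\cong\varinjlim_n M'/y^nM'$, with transition maps given by multiplication by $y$. Since $R$ is noetherian, $R/I$ has a resolution by finitely generated free modules, so $\Ext_R^j(R/I,-)$ commutes with this filtered colimit, and it remains to show $\varinjlim_n\Ext_R^j(R/I,M'/y^nM')$ is finitely generated. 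From the exact sequences $0\to M'\xrightarrow{y^n}M'\to M'/y^nM'\to0$, together with the fact that $\Ext_R^\bullet(R/I,M')$ is annihilated by $I\ni y$, one obtains short exact sequences $0\to\Ext_R^j(R/I,M')\to\Ext_R^j(R/I,M'/y^nM')\to\Ext_R^{j+1}(R/I,M')\to0$, and a short diagram chase—using that the transition maps kill the left-hand term and, after one step, restrict to isomorphisms on the right-hand term—identifies the colimit of this direct system with $\Ext_R^{j+1}(R/I,M')$, which is finitely generated. (The abelian hypothesis is not used in the base case.)

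For the inductive step, write $\mathfrak c=(y_1,\dots,y_s)\subseteq I$ and put $\a=(y_1)$, $\b=(y_2,\dots,y_s)$, so $\a+\b=\mathfrak c$, while $\V(\a\cap\b)=\V(\a\b)$ with $\a\b=(y_1y_2,\dots,y_1y_s)$ contained in $I$ and generated by $s-1$ elements. By the induction hypothesis (and the base case for $\a$), the modules $\h_\a^i(M)$, $\h_\b^i(M)$, $\h_{\a\b}^i(M)$ all lie in $\cof_I^0(R)$, and the Mayer--Vietoris sequence
\[
\cdots\to\h_\a^{i-1}(M)\oplus\h_\b^{i-1}(M)\to\h_{\a\b}^{i-1}(M)\to\h_{\mathfrak c}^i(M)\to\h_\a^i(M)\oplus\h_\b^i(M)\to\h_{\a\b}^i(M)\to\cdots
\]
yields a short exact sequence $0\to C\to\h_{\mathfrak c}^i(M)\to K\to0$ in which $K$ is the kernel of a morphism of objects of $\cof_I^0(R)$ and $C$ is the cokernel of the inclusion into $\h_{\a\b}^{i-1}(M)$ of the image of a morphism of objects of $\cof_I^0(R)$. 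Since $\cof_I^0(R)$ is abelian, hence closed under kernels, cokernels and images of its morphisms (Remark \ref{9}(2)), we get $K,C\in\cof_I^0(R)$; since it is also closed under extensions (Proposition \ref{2}), we conclude $\h_{\mathfrak c}^i(M)\in\cof_I^0(R)$, completing the induction.

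I expect the base case to be the main obstacle: $\h_{(y)}^1(M)$ is typically not finitely generated, so its membership in $\cof_I^0(R)$ must be read off from the colimit presentation, the delicate point being that one needs $I$-ETH-cofiniteness rather than $(y)$-ETH-cofiniteness. A further point to watch is that the Mayer--Vietoris pieces $\h_\b^i(M)$ and $\h_{\a\b}^i(M)$ are, a priori, cofinite only with respect to ideals properly smaller than $I$; this is exactly why the inductive statement has to be phrased via membership in $\cof_I^0(R)$, which imposes no support condition, rather than via $I$-cofiniteness.
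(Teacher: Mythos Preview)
Your proof is correct and follows the same architecture as the paper's: prove by induction on the number of generators that $\h_{(x_1,\dots,x_n)}^i(M)\in\cof_I^0(R)$ for all $x_1,\dots,x_n\in I$, using Mayer--Vietoris for the inductive step and then invoking $\cof_I(R)=\cof_I^0(R)\cap\supp^{-1}\V(I)$ at the end.

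The only substantive difference is in the base case $n=1$. Your colimit argument is valid, but the paper handles it more cheaply: from the \v{C}ech complex one has the exact sequence $0\to M/\Gamma_{(x)}(M)\to M_x\to\h_{(x)}^1(M)\to0$, and the key observation is that $\Ext_R^i(R/I,M_x)\cong\Ext_{R_x}^i(R_x/IR_x,M_x)=0$ for all $i$ because $x\in I$ forces $IR_x=R_x$. Thus $M_x\in\cof_I^0(R)$ directly, and since $M/\Gamma_{(x)}(M)\in\mod R\subseteq\cof_I^0(R)$, thickness (Proposition~\ref{2}) gives $\h_{(x)}^1(M)\in\cof_I^0(R)$ without any colimit analysis. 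This bypasses the need to track transition maps or invoke commutation of $\Ext$ with filtered colimits, and it makes transparent why the abelian hypothesis is unnecessary for $n\le 1$.
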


\begin{proof}
We begin with establishing a claim.
\begin{claim*}
Let $n$ be a non-negative integer.
Let $x_1,\dots,x_n$ be elements of $I$.
Then the module $\h_{(x_1,\dots,x_n)}^i(M)$ belongs to $\cof_I^0(R)$ for all integers $i$.
\end{claim*}
\begin{proof}[Proof of Claim]
We use induction on $n$.
Let $n=0$.
Then $(x_1,\dots,x_n)=(0)$.
We have $\h_{(0)}^0(M)=M$ and $\h_{(0)}^i(M)=0$ for all $i\ne0$.
Hence $\h_{(0)}^i(M)\in\mod R\subseteq\cof_I^0(R)$ for all $i\in\Z$ (see Remark \ref{8}(2)).
Let $n=1$.
Then $(x_1,\dots,x_n)=(x)$, where $x:=x_1$.
Then $\h_{(x)}^i(M)=0$ for all $i\ne0,1$, and $\h_{(x)}^0(M)=\Gamma_{(x)}(M)\in\mod R\subseteq\cof_I^0(R)$.
The \v{C}ech complex of $x$ induces a short exact sequence
$$
0\to M/\Gamma_{(x)}(M)\to M_x\to\h_{(x)}^1(M)\to0
$$
of $R$-modules.
We have $M/\Gamma_{(x)}(M)\in\mod R\subseteq\cof_I^0(R)$.
For each $i$ the module $\Ext_R^i(R/I,M_x)$ is isomorphic to $\Ext_{R_x}^i(R_x/IR_x,M_x)$, and the latter module is zero as $IR_x=R_x$.
Hence $M_x$ is in $\cof_I^0(R)$.
Proposition \ref{2} implies that $\cof_I^0(R)$ is closed under cokernels of monomorphisms, so that $\h_{(x)}^1(M)$ belongs to $\cof_I^0(R)$.

Let $n\ge2$ and fix an integer $i$.
Note that $\h_{\a\cap\b}^j=\h_{\a\b}^j$ for all ideals $\a,\b$ of $R$ and all integers $j$; see \cite[Page 47]{BS}.
Using this fact and the Mayer--Vietoris sequence \cite[3.2.3]{BS}, we get an exact sequence
\begin{align*}
&\h_{(x_1,\dots,x_{n-1})}^{i-1}(M)\oplus\h_{(x_n)}^{i-1}(M)\xrightarrow{f_{i-1}}\h_{(x_1,\dots,x_{n-1})x_n}^{i-1}(M)\to\h_{(x_1,\dots,x_n)}^{i}(M)\\
\to&\h_{(x_1,\dots,x_{n-1})}^{i}(M)\oplus\h_{(x_n)}^{i}(M)\xrightarrow{f_i}\h_{(x_1,\dots,x_{n-1})x_n}^{i}(M)
\end{align*}
for every integer $i$.
The induction hypothesis implies that the $R$-modules
$$
\h_{(x_1,\dots,x_{n-1})}^{i-1}(M),\,
\h_{(x_n)}^{i-1}(M),\,
\h_{(x_1,\dots,x_{n-1})x_n}^{i-1}(M),\,
\h_{(x_1,\dots,x_{n-1})}^{i}(M),\,
\h_{(x_n)}^{i}(M),\,
\h_{(x_1,\dots,x_{n-1})x_n}^{i}(M)
$$
belong to $\cof_I^0(R)$.
From the assumption that $\cof_I^0(R)$ is an abelian subcategory of $\Mod R$, it follows that $\cok f_{i-1}$ and $\ker f_i$ are in $\cof_I^0(R)$.
There is a short exact sequence
$$
0\to\cok f_{i-1}\to\h_{(x_1,\dots,x_n)}^{i}(M)\to\ker f_i\to0
$$
of $R$-modules.
By Proposition \ref{2}, the subcategory $\cof_I^0(R)$ of $\Mod R$ is closed under extensions.
It is seen from the above short exact sequence that the module $\h_{(x_1,\dots,x_n)}^{i}(M)$ belongs to $\cof_I^0(R)$.
\renewcommand{\qedsymbol}{$\square$}
\end{proof}
Let $x_1,\dots,x_n$ be a system of generators of the ideal $I$.
We deduce from the above claim that $\h_I^i(M)$ belongs to $\cof_I^0(R)$ for all integers $i$.
Since the support of the $R$-module $\h_I^i(M)$ is contained in $\V(I)$, we obtain $\h_I^i(M)\in\cof_I^0(R)\cap\supp^{-1}\V(I)=\cof_I(R)$ for all $i$ by Remark \ref{8}(2).
\end{proof}

\begin{rem}
Let $R$ and $I$ be as in Example \ref{1}.
\begin{enumerate}[(1)]
\item
As a consequence of Proposition \ref{11}, one sees that the subcategory $\cof_I(R)$ of $\Mod R$ is not Serre.
(As is stated in Example \ref{1}, the subcategory $\cof_I(R)$ of $\Mod R$ is not even abelian.)
\item
It follows from Proposition \ref{12} that the subcategory $\cof_I^0(R)$ of $\Mod R$ is not abelian.
\end{enumerate}
\end{rem}

We close the section by presenting a natural question arising from Proposition \ref{12}.

\begin{ques}
Does the converse of Proposition \ref{12} hold?
To be precise, suppose that $\h_I^i(M)$ is $I$-cofinite for all finitely generated $R$-modules $M$ and all integers $i$.
Is then $\cof_I^0(R)$ an abelian subcategory of $\Mod R$\,?
\end{ques}

%%%%%%%%%%%%%%%%%%%%%%%%%%%%%%
\section{The subcategory $\c_I(R)$ of $\mod R$}

In this section, applying the results obtained in the previous section, we study the structure of the subcategory $\c_I(R)$ of the category $\mod R$ of finitely generated $R$-modules, whose definition and a couple of whose basic properties are stated below.

\begin{dfn}
We denote by $\c_I(R)$ the subcategory of $\mod R$ consisting of finitely generated $R$-modules $M$ such that the $R$-module $\h_I^i(M)$ is $I$-cofinite for all integers $i$.
\end{dfn}

\begin{rem}\label{10}
\begin{enumerate}[(1)]
\item
The subcategory $\c_I(R)$ of $\mod R$ is closed under finite direct sums and direct summands.
Indeed, Proposition \ref{2} says that the subcategory $\cof_I(R)$ of $\Mod R$ is closed under finite direct sums and direct summands, from which one can easily deduce the assertion.
\item
Question \ref{17} is equivalent to asking whether the abelianity of the subcategory $\cof_I(R)$ of $\Mod R$ implies the equality $\c_I(R)=\mod R$.
\end{enumerate}
\end{rem}

We give a connection of the subcategory $\c_I(R)$ of $\mod R$ with the subcategory $\cof_I(R)$ of $\Mod R$. 

\begin{prop}\label{14}
Suppose that $\cof_I(R)$ is an abelian subcategory of $\Mod R$.
Then $\c_I(R)$ is a thick subcategory of $\mod R$.
\end{prop}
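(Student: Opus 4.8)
The plan is to verify the criterion in Remark~\ref{9}(3). Since $\c_I(R)$ is closed under direct summands by Remark~\ref{10}(1), it will suffice to establish the two-out-of-three property: for a short exact sequence $0\to L\to M\to N\to0$ in $\mod R$ with two of $L,M,N$ in $\c_I(R)$, the third also lies in $\c_I(R)$. All three modules are finitely generated to begin with (the sequence lives in $\mod R$), so the only thing to show is that the local cohomology modules of the third, in every degree, belong to $\cof_I(R)$.

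The tools are the long exact sequence of local cohomology
$$
\cdots\to\h_I^{i-1}(N)\xrightarrow{\delta^{i-1}}\h_I^i(L)\xrightarrow{\alpha^i}\h_I^i(M)\xrightarrow{\beta^i}\h_I^i(N)\xrightarrow{\delta^i}\h_I^{i+1}(L)\to\cdots
$$
attached to the short exact sequence, together with the structural information on $\cof_I(R)$ obtained so far: it is abelian by hypothesis, hence wide by Remark~\ref{7} (which rests on Proposition~\ref{2}), so it is closed under kernels, cokernels and extensions. The key point is that whenever two of the three graded modules $\h_I^\bullet(L),\h_I^\bullet(M),\h_I^\bullet(N)$ consist of $I$-cofinite modules, every map in the long exact sequence linking two of these \emph{known} modules has both its source and its target in $\cof_I(R)$, and therefore its kernel and its cokernel lie in $\cof_I(R)$ by wideness.

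Concretely, I would fix $i$ and, in each of the three cases, read off from the long exact sequence a short exact sequence $0\to A\to\h_I^i(X)\to B\to0$, where $X$ denotes the unknown module, $A$ is the cokernel of a map between two known local cohomology modules, and $B$ is the kernel of another such map. If $L,M\in\c_I(R)$, take $A=\cok(\alpha^i)\cong\operatorname{im}\beta^i$ and $B=\ker(\alpha^{i+1})\cong\operatorname{im}\delta^i$; if $L,N\in\c_I(R)$, take $A=\cok(\delta^{i-1})\cong\operatorname{im}\alpha^i$ and $B=\ker(\delta^i)\cong\operatorname{im}\beta^i$; if $M,N\in\c_I(R)$, take $A=\cok(\beta^{i-1})\cong\operatorname{im}\delta^{i-1}$ and $B=\ker(\beta^i)\cong\operatorname{im}\alpha^i$. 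In every case $A,B\in\cof_I(R)$ by wideness, and since $\cof_I(R)$ is closed under extensions it follows that $\h_I^i(X)\in\cof_I(R)$; letting $i$ range over $\Z$ yields $X\in\c_I(R)$, which completes the verification.

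I do not anticipate a genuine obstacle. The only thing needing care is the bookkeeping that, in each of the three cases, identifies the correct pair of maps whose cokernel and kernel sandwich $\h_I^i(X)$ and checks that these maps have both endpoints in $\cof_I(R)$ — so that it is the wideness, and not merely the thickness, of $\cof_I(R)$ that is being used (thickness alone would only handle kernels of epimorphisms and cokernels of monomorphisms, which is not enough here since $\alpha^i,\beta^i,\delta^i$ are in general neither).
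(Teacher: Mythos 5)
Your proof is correct and follows essentially the same route as the paper: the long exact sequence of local cohomology, closure of $\cof_I(R)$ under kernels and cokernels (from abelianity/wideness) applied to maps between the known local cohomology modules, extension-closure from Proposition \ref{2} to recover $\h_I^i$ of the unknown module, and Remark \ref{10}(1) plus the two-out-of-three criterion of Remark \ref{9}(3). The only difference is cosmetic: the paper writes out one of the three cases (kernels of epimorphisms) and declares the others analogous, whereas you record the sandwiching short exact sequence explicitly in all three cases.
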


\begin{proof}
Let $0\to L\xrightarrow{f}M\xrightarrow{g}N\to0$ be a short exact sequence of finitely generated $R$-modules.
Then for each integer $i$ there exists an exact sequence
$$
\h_I^{i-1}(M)\xrightarrow{\h_I^{i-1}(g)}\h_I^{i-1}(N)\xrightarrow{\delta^{i-1}}\h_I^i(L)\xrightarrow{\h_I^{i}(f)}\h_I^i(M)\xrightarrow{\h_I^{i}(g)}\h_I^i(N)\xrightarrow{\delta^i}\h_I^{i+1}(L)\xrightarrow{\h_I^{i+1}(f)}\h_I^{i+1}(M).
$$
Assume that $M,N$ are in $\c_I(R)$.
Then for each $i$ the four $R$-modules $\h_I^{i-1}(M)$, $\h_I^{i-1}(N)$, $\h_I^i(M)$ and $\h_I^i(N)$ belong to $\cof_I(R)$.
Since $\cof_I(R)$ is abelian, it is closed under kernels and cokernels.
Hence $\cok\h_I^{i-1}(g)$ and $\ker\h_I^{i}(g)$ are in $\cof_I(R)$.
By Proposition \ref{2}, the subcategory $\cof_I(R)$ of $\Mod R$ is thick, so in particular it is closed under extensions.
The exact sequence
$$
0\to\cok\h_I^{i-1}(g)\to\h_I^i(L)\to\ker\h_I^{i}(g)\to0
$$
is induced, which implies that $\h_I^i(L)$ is in $\cof_I(R)$.
It follows that $L$ belongs to $\c_I(R)$.
Thus, $\c_I(R)$ is closed under kernels of epimorphisms.
An analogous argument shows that $\c_I(R)$ is closed under cokernels of monomorphisms and extensions.

Thus, $\c_I(R)$ satisfies the two-out-of-three property.
It follows from this together with Remark \ref{10}(1) that $\c_I(R)$ is a thick subcategory of $\mod R$.
\end{proof}

To prove the main result of this section, we need some preparations.
We first recall the definitions of certain loci in $\spec R$ and a certain invariant for finitely generated $R$-modules.

\begin{dfn}
Let $M$ be a finitely generated $R$-module.
\begin{enumerate}[(1)]
\item
The set of prime ideals $\p$ of $R$ such that the $R_\p$-module $M_\p$ is nonfree is called the {\em nonfree locus} of $M$ and denoted by $\nf(M)$.
The set of prime ideals $\p$ of $R$ such that the $R_\p$-module $M_\p$ has infinite projective dimension is called the {\em infinite projective dimension locus} of $M$ and denoted by $\ipd(M)$.
%It is clear that the inclusion $\nf(M)\supseteq\ipd(M)$ holds.
%Both $\nf(M)$ and $\ipd(M)$ are closed subsets of $\spec R$ in the Zariski topology; see \cite{}.
\item
We denote by $\rfd_RM$ the {\em (large) restricted flat dimension} of $M$, which is defined by
$$
\rfd_RM=\sup_{\p\in\spec R}\{\depth R_\p-\depth M_\p\}.
$$
It holds that $\rfd_RM\in\Z_{\ge0}\cup\{-\infty\}$, and $\rfd_RM=-\infty$ if and only if $M=0$; see \cite[Theorem 1.1]{AIL} and \cite[Proposition (2.2) and Theorem (2.4)]{CFF}.
\end{enumerate}
\end{dfn}

Next we recall the definition of a syzygy.
Let $M$ be a finitely generated $R$-module.
Let
$$
\cdots\xrightarrow{\partial_{i+1}}P_i\xrightarrow{\partial_{i}}
P_{i-1}\xrightarrow{\partial_{i-1}}
\cdots\xrightarrow{\partial_{2}}
P_1\xrightarrow{\partial_{1}}
P_0\xrightarrow{\partial_{0}}
M\to0
$$
be an exact sequence of finitely generated $R$-modules such that each $P_i$ is projective.
Then the image of $\partial_i$ is called the {\em $i$th syzygy} of $M$, and denoted by $\syz^iM$.
Note that it is uniquely determined by $M$ and $i$ up to projective summands, that is, if $X$ and $Y$ are the $i$th syzygies of $M$, then $X\oplus P\cong Y\oplus Q$ for some finitely generated projective $R$-modules $P$ and $Q$.

For a subset $\Phi$ of $\spec R$, we denote by $\nf^{-1}(\Phi)$ and $\ipd^{-1}(\Phi)$ the subcategories of $\mod R$ consisting of finitely generated $R$-modules $M$ such that $\nf(M)$ and $\ipd(M)$ are contained in $\Phi$, respectively.
The following lemma is necessary in the proof of our main result in this section.

\begin{lem}\label{13}
\begin{enumerate}[\rm(1)]
\item
Let $\Phi$ be a subset of $\spec R$.
Let $M$ be a nonzero finitely generated $R$-module.
Put $r=\rfd_RM$.
If $M$ belongs to $\ipd^{-1}(\Phi)$, then $\syz^rM$ belongs to $\nf^{-1}(\Phi)$.
\item
There is an inclusion $\supp^{-1}(\V(I))\cap\mod R\subseteq\c_I(R)$.
\end{enumerate}
\end{lem}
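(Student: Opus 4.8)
The plan is to handle the two parts independently; neither requires anything deep beyond the Auslander--Buchsbaum formula and standard facts about local cohomology of torsion modules.

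For part (1), I would argue pointwise on $\spec R$. Fix a prime $\p\notin\Phi$; it suffices to show that $(\syz^rM)_\p$ is a free $R_\p$-module. Since $M\in\ipd^{-1}(\Phi)$, the hypothesis $\p\notin\Phi$ gives $\pd_{R_\p}M_\p<\infty$, and then the Auslander--Buchsbaum formula yields $\pd_{R_\p}M_\p=\depth R_\p-\depth M_\p$. By the very definition of the restricted flat dimension, $\depth R_\p-\depth M_\p\le\rfd_RM=r$; here it is used that $r$ is an honest non-negative integer, which holds because $M\neq0$ (via the cited results of Avramov--Iyengar--Lipman and Christensen--Foxby--Frankild). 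Now localizing a projective resolution of $M$ at $\p$ produces a projective resolution of $M_\p$ over $R_\p$, so $(\syz^rM)_\p$ is an $r$th syzygy of $M_\p$ over $R_\p$. Since $r\ge\pd_{R_\p}M_\p$, every $r$th syzygy of $M_\p$ is a finitely generated projective $R_\p$-module, hence free (as $R_\p$ is local). Therefore $\p\notin\nf(\syz^rM)$, which is exactly the assertion $\syz^rM\in\nf^{-1}(\Phi)$.

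For part (2), let $M$ be a finitely generated $R$-module with $\supp M\subseteq\V(I)$. Because $M$ is finitely generated, $\supp M=\V(\operatorname{ann}_RM)$, so $I\subseteq\sqrt{\operatorname{ann}_RM}$ and a power of $I$ annihilates $M$; in particular $\Gamma_I(M)=M$, i.e., $M$ is $I$-torsion. I would then invoke the standard fact that $\h_I^0(M)=\Gamma_I(M)=M$ and $\h_I^i(M)=0$ for $i>0$ when $M$ is $I$-torsion (one may see this from the fact that the minimal injective resolution of an $I$-torsion module consists of $I$-torsion injectives, on which $\Gamma_I$ acts as the identity; cf. \cite{BS}). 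Thus $\h_I^i(M)$ vanishes for $i>0$, hence is trivially $I$-cofinite, while $\h_I^0(M)=M$ is finitely generated with support contained in $\V(I)$, so $\Ext_R^i(R/I,M)$ is finitely generated for all $i$; hence $M$ is $I$-cofinite. Therefore $M\in\c_I(R)$, and the inclusion follows.

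I do not anticipate a genuine obstacle. The only points demanding a moment's care are the compatibility of syzygies with localization (that $(\syz^rM)_\p$ is an $r$th syzygy of $M_\p$, which follows since localization is exact and carries finitely generated projectives to finitely generated projectives) and the finiteness of $\rfd_RM$, which is precisely the content of the results already quoted alongside the definition; granting these, both parts are short.
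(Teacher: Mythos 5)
Your proposal is correct; part (2) is in substance the same argument the paper gives (the paper cites \cite[Proposition 3.2(1a)]{cd} for the vanishing $\h_I^i(M)=0$, $i>0$, and $\h_I^0(M)=M$, then uses that a finitely generated module supported in $\V(I)$ is $I$-cofinite). For part (1) you take a genuinely more self-contained route: the paper simply invokes the equality $\ipd(M)=\nf(\syz^rM)$ from \cite[Remark 10.2(4)]{dlr}, whereas you prove directly the one inclusion actually needed, namely $\nf(\syz^rM)\subseteq\ipd(M)$, by localizing at $\p\notin\Phi$, using $\pd_{R_\p}M_\p<\infty$ together with the Auslander--Buchsbaum formula and the definition of $\rfd$ to get $\pd_{R_\p}M_\p\le r$, and then observing that an $r$th syzygy of $M_\p$ is free because localization of a projective resolution is a projective resolution and syzygies are well defined up to projective (hence free) summands over the local ring $R_\p$. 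This buys independence from the external reference and is slightly more economical (you avoid the reverse inclusion, which the lemma does not need); what it loses is only the stronger statement recorded in \cite{dlr}. One small point worth a sentence in a polished write-up: when $M_\p=0$ the Auslander--Buchsbaum step does not literally apply, but then the localized resolution is an exact complex of finitely generated free $R_\p$-modules augmented to $0$, so every localized syzygy splits off as a free module and $\p\notin\nf(\syz^rM)$ anyway; with that remark your argument is complete.
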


\begin{proof}
(1) The assertion follows from the equality $\ipd(M)=\nf(\syz^rM)$ shown in \cite[Remark 10.2(4)]{dlr}.

(2) Let $M$ be a finitely generated $R$-module such that $\supp M\subseteq\V(I)$.
Then $\h_I^0(M)=M$ and $\h_I^i(M)=0$ for all $i>0$; see \cite[Proposition 3.2(1a)]{cd}.
Hence for all integers $i$, the $R$-module $\h_I^i(M)$ is finitely generated, which implies that $\h_I^i(M)$ is $I$-cofinite (see Remark \ref{8}(2)).
Therefore, $M$ belongs to $\c_I(R)$.
\end{proof}

We also need the notion of resolving subcategories of finitely generated modules.

\begin{dfn}\label{20}
\begin{enumerate}[(1)]
\item
A {\em resolving subcategory} of $\mod R$ is defined to be a subcategory $\X$ of $\mod R$ which satisfies the following three conditions.
\begin{enumerate}[(i)]
\item
$\X$ contains the finitely generated projective $R$-modules.
\item
$\X$ is closed under direct summands and extensions.
\item
$\X$ is closed under kernels of epimorphisms.
\end{enumerate}
\item
For a subcategory $\C$ of $\mod R$ we denote by $\res\C$ the {\em resolving closure} of $\C$, that is to say, the smallest resolving subcategory of $\mod R$ containing $\C$.
\end{enumerate}\end{dfn}

\begin{rem}\label{21}
In Definition \ref{20}(1), condition (i) can be replaced with the condition that $\X$ contains $R$.
In fact, a finitely generated projective $R$-module is a direct summand of a finite direct sum of copies of $R$, and if $\X$ is closed under extensions, then it is closed under finite direct sums.
\end{rem}

We denote by $\sing R$ the {\em singular locus} of $R$, that is, the set of prime ideals $\p$ of $R$ such that the local ring $R_\p$ is not regular.
Now we are ready to prove the following theorem, which is the main result of this section.

\begin{thm}\label{6}
Suppose that $R$ belongs to $\c_I(R)$ and that $\cof_I(R)$ is an abelian subcategory of $\Mod R$.
\begin{enumerate}[\rm(1)]
\item
One has that $\ipd^{-1}(\V(I))$ is contained in $\c_I(R)$.
\item
Suppose that $\sing R$ is contained in $\V(I)$.
Then $\c_I(R)$ coincides with $\mod R$, that is to say, $\h_I^i(M)$ is $I$-cofinite for every integer $i$ and every finitely generated $R$-module $M$.
\end{enumerate}
\end{thm}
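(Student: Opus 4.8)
The plan is to deduce (2) from (1) and then to establish (1) by combining the thickness of $\c_I(R)$ with the syzygy machinery of this section. Statement (2) is the easy part: if $\sing R\subseteq\V(I)$, then $R_\p$ is regular for every prime $\p\not\supseteq I$, so $\pd_{R_\p}M_\p<\infty$ for every finitely generated $R$-module $M$; hence $\ipd(M)\subseteq\sing R\subseteq\V(I)$, that is, $\mod R=\ipd^{-1}(\V(I))$, and (1) gives $\c_I(R)=\mod R$ at once. So the substance lies in (1).

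For (1) I would set things up as follows. By Proposition \ref{14} the category $\c_I(R)$ is a thick subcategory of $\mod R$; since it contains $R$ it contains every finitely generated projective, so it is a resolving subcategory of $\mod R$ by Remark \ref{21}, and thickness makes it in addition closed under cokernels of monomorphisms. Feeding the short exact sequences $0\to\syz^{n+1}M\to P\to\syz^nM\to0$ (with $P$ finitely generated projective) into closure under kernels of epimorphisms and cokernels of monomorphisms yields the criterion that $M\in\c_I(R)$ if and only if $\syz^nM\in\c_I(R)$ for some (equivalently every) $n\ge0$. Also $\supp^{-1}(\V(I))\cap\mod R\subseteq\c_I(R)$ by Lemma \ref{13}(2). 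Granting the inclusion $\nf^{-1}(\V(I))\cap\mod R\subseteq\c_I(R)$, statement (1) is immediate: for a nonzero $M\in\ipd^{-1}(\V(I))$, put $r=\rfd_RM\in\Z_{\ge0}$; then Lemma \ref{13}(1) gives $\syz^rM\in\nf^{-1}(\V(I))\subseteq\c_I(R)$, whence $M\in\c_I(R)$ by the criterion.

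Everything thus comes down to proving $\nf^{-1}(\V(I))\cap\mod R\subseteq\c_I(R)$, which I expect to be the main obstacle. As $\c_I(R)$ is resolving and contains $\supp^{-1}(\V(I))\cap\mod R$, it is enough to show that every finitely generated $N$ with $\nf(N)\subseteq\V(I)$ lies in $\res(\supp^{-1}(\V(I))\cap\mod R)$, in other words to prove the identification
$$
\res\bigl(\supp^{-1}(\V(I))\cap\mod R\bigr)=\nf^{-1}(\V(I))\cap\mod R,
$$
of which $\subseteq$ is formal (the right-hand side is a resolving subcategory). For the reverse inclusion I would use the natural map $N\to N^{\ast\ast}$: it localizes to an isomorphism at each prime outside $\nf(N)$ (where $N$ is locally free), so its kernel and cokernel are supported in $\nf(N)\subseteq\V(I)$; splicing the resulting short exact sequences and using closure under extensions and kernels of epimorphisms reduces to the case of reflexive $N$, i.e. to a second syzygy ($0\to N\to F_0\to F_1$ with $F_i$ finitely generated free, obtained by dualizing a finite free presentation of $N^\ast$). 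From there one must descend, through such presentations and the syzygy criterion, down to modules supported in $\V(I)$. The delicate point --- the part I expect to demand genuine work rather than formal manipulation --- is to keep the nonfree locus inside $\V(I)$ along this descent and to ensure it terminates; this is where the finer properties of large restricted flat dimension (for instance that $\syz^{\rfd_RN}N$ has restricted flat dimension at most $0$) and of syzygies come in, or where one appeals to structure results on resolving subcategories. Once the displayed identification is established, (1) and hence (2) follow as above.
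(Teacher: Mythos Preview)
Your overall architecture matches the paper's proof exactly: deduce (2) from (1); for (1), use Proposition~\ref{14} and Remark~\ref{21} to see that $\c_I(R)$ is resolving and thick, use Lemma~\ref{13}(2) to get $\supp^{-1}(\V(I))\cap\mod R\subseteq\c_I(R)$, reduce via Lemma~\ref{13}(1) and $r=\rfd_RM$ to showing $\nf^{-1}(\V(I))\subseteq\c_I(R)$, and finish by the syzygy criterion. You have also correctly isolated the one nontrivial input, namely the equality
\[
\res\bigl(\supp^{-1}(\V(I))\cap\mod R\bigr)=\nf^{-1}(\V(I)).
\]
The paper does not reprove this; it simply invokes \cite[Theorem~4.1(1)]{kos}. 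So your ``fallback'' of appealing to structure results on resolving subcategories is precisely what the paper does, and with that citation your proof is complete and essentially identical to the paper's.

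Regarding your attempt to prove the displayed equality directly: the biduality step is fine (the kernel and cokernel of $N\to N^{\ast\ast}$ are supported in $\nf(N)\subseteq\V(I)$, and the two short exact sequences let you pass from $N^{\ast\ast}$ to $N$ inside any resolving subcategory containing $\supp^{-1}(\V(I))\cap\mod R$), and it is true that $N^{\ast\ast}$ embeds as a second syzygy via the dual of a presentation of $N^{\ast}$. But the descent you describe after that does not go through on its own: writing $N^{\ast\ast}=\syz^2 L$ gives no control over $\nf(L)$, so you cannot iterate within $\nf^{-1}(\V(I))$, and there is no evident termination. You already flag exactly this difficulty; the honest resolution is the classification theorem from \cite{kos}, whose proof goes through Koszul homology rather than biduality-and-descent. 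In short, your instinct that this is where the genuine work lies is correct, and the paper outsources that work to the cited reference.
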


\begin{proof}
(1) Let $M$ be finitely generated $R$-module with $M\in\ipd^{-1}(\V(I))$.
Put $r=\rfd_RM$.
It holds that
\begin{equation}\label{15}
\syz^rM
\in\nf^{-1}(\V(I))
=\res(\supp^{-1}(\V(I))\cap\mod R)
\subseteq\c_I(R).
\end{equation}
In fact, the containment and the equality in \eqref{15} are consequences of Lemma \ref{13}(1) and \cite[Theorem 4.1(1)]{kos}, respectively.
Since $\cof_I(R)$ is an abelian subcategory of $\Mod R$, it follows from Proposition \ref{14} that $\c_I(R)$ is a thick subcategory of $\mod R$.
As $R$ belongs to $\c_I(R)$ by assumption, we see from Remark \ref{21} that $\c_I(R)$ is a resolving subcategory of $\mod R$.
The inclusion in \eqref{15} is now shown by Lemma \ref{13}(2).

By \eqref{15}, the syzygy $\syz^rM$ is in $\c_I(R)$.
Since $\c_I(R)$ contains any finitely generated projective $R$-module and is closed under cokernels of monomorphisms, it is seen by descending induction on $r$ that $M$ is in $\c_I(R)$.

(2) Let $M$ be a finitely generated $R$-module.
If $\p$ is a prime ideal of $R$ such that $R_\p$ is a regular local ring, then the $R_\p$-module $M_\p$ has finite projective dimension.
This shows that $\ipd(M)$ is contained in $\sing R$, and the latter set is contained in $\V(I)$ by assumption.
Therefore, the module $M$ belongs to $\ipd^{-1}(\V(I))$.
By virtue of (1), we conclude that $M$ is in $\c_I(R)$.
Now we obtain the equality $\c_I(R)=\mod R$.
\end{proof}

\begin{rem}
Hartshorne's Example \ref{1} satisfies both of the assumptions in Theorem \ref{6} that $R\in\c_I(R)$ and $\sing R\subseteq\V(I)$.
In fact, the former statement has been verified in Example \ref{1}.
As for the latter, since the ring $R$ is regular, $\sing R$ is an empty set, so that it is contained in $\V(I)$.
\end{rem}

Now we can complete the proofs of our main theorems stated in Section 1.

\begin{proof}[Proof of Theorem \ref{3}]
The assertion follows from Propositions \ref{11}, \ref{12} and Theorem \ref{6}(2).
\end{proof}

\begin{proof}[Proof of Theorem \ref{16}]
The assertion is a direct consequence of Theorem \ref{6}(1).
\end{proof}

Recall that a local ring $(R,\m)$ is called an {\em isolated singularity} if the localization $R_\p$ is a regular local ring for all prime ideals $\p$ of $R$ with $\p\ne\m$.
Here is an application of the above theorem.

\begin{cor}
Let $R$ be an isolated singularity such that $\h_I^i(R)$ is $I$-cofinite for all $i\in\Z$.
If $\cof_I(R)$ is an abelian subcategory of $\Mod R$, then $\h_I^i(M)$ is $I$-cofinite for all finitely generated $R$-modules $M$ and all $i\in\Z$.
\end{cor}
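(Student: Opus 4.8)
The plan is to derive the corollary directly from Theorem~\ref{3}(3) (equivalently, from Theorem~\ref{6}(2)), whose hypotheses are: $\cof_I(R)$ abelian, $\h_I^i(R)$ being $I$-cofinite for every $i$, and $\sing R\subseteq\V(I)$. The first two are assumed in the corollary verbatim, so the only thing that needs to be checked is the containment $\sing R\subseteq\V(I)$, and this is where the (very short) argument goes.

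First I would dispose of the degenerate case $I=R$: then $\Gamma_I$ is the zero functor, whence $\h_I^i(M)=0$ for all $i$, and the zero module is $I$-cofinite, so the conclusion is immediate; hence we may assume $I$ is a proper ideal of $R$. By definition an isolated singularity is a local ring $(R,\m)$; since $I\subsetneq R$ we have $I\subseteq\m$, i.e. $\m\in\V(I)$. On the other hand, the defining property of an isolated singularity is precisely that $R_\p$ is regular for every prime $\p\ne\m$, that is, $\sing R\subseteq\{\m\}$. Combining these two observations gives $\sing R\subseteq\{\m\}\subseteq\V(I)$.

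With $\sing R\subseteq\V(I)$ in hand, all three hypotheses of Theorem~\ref{3}(3) hold, and one concludes that $\h_I^i(M)$ is $I$-cofinite for every finitely generated $R$-module $M$ and every integer $i$. I do not expect any genuine obstacle here: the only subtlety is the elementary remark that, for a \emph{local} ring, a proper ideal is automatically contained in the maximal ideal, which is exactly what promotes the pointwise regularity condition built into the notion of an isolated singularity to the global inclusion $\sing R\subseteq\V(I)$ demanded by Theorem~\ref{3}(3).
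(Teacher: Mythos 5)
Your proposal is correct and follows essentially the same route as the paper: dispose of the case $I=R$ (where all local cohomology vanishes), note that a proper ideal of the local ring is contained in $\m$ so $\sing R\subseteq\{\m\}\subseteq\V(I)$, and invoke Theorem \ref{6}(2) (equivalently Theorem \ref{3}(3)). No gaps.
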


\begin{proof}
If $I$ is a unit ideal of $R$, then $\h_I^i(M)=0$ for all $R$-modules $M$ and all integers $i$, and the conclusion obviously holds.
Hence we may assume that $I$ is contained in the maximal ideal $\m$ of $R$, and then it holds that $\sing R\subseteq\{\m\}\subseteq\V(I)$.
Now the assertion follows from Theorem \ref{6}(2).
\end{proof}

In view of Theorems \ref{3}(2)(3) and \ref{16}, we may wonder how strong the assumption that $R\in\c_I(R)$ is.
Needless to say, this is at least a necessary condition for the equality $\c_I(R)=\mod R$ to hold, but it would be reasonable to state it explicitly as a question.

\begin{ques}\label{5}
Let $I$ be an ideal of $R$.
When is $\h_I^i(R)$ an $I$-cofinite module for all integers $i$? 
\end{ques}

\begin{rem}
\begin{enumerate}[(1)]
\item
It is proved in \cite[Theorem 2]{B} that $\h_I^i(M)$ is $I$-cofinite for all integers $i$ and all finitely generated $R$-modules $M$ if and only if $\h_I^i(R)$ is $I$-cofinite and has Krull dimension at most $1$ for all $i\ge2$.
\item
It does not necessarily hold that $\h_I^i(R)$ is $I$-cofinite for all $i$, even if $R$ is a regular local ring.
Indeed, for example, consider the formal power series ring $R=k[\![x,y,z]\!]$ over a field $k$, and the ideal $I=(xy,xz)$ of $R$.
Then it follows from \cite[Theorem 3.8]{BN} that $\Hom_R(R/I,\h_I^2(R))$ is not a finitely generated $R$-module.
In particular, the $R$-module $\h_I^2(R)$ is not $I$-cofinite.
\end{enumerate}
\end{rem}

%%%%%%%%%%%%%%%%%%%%%%%%%%%%%%%%%%%
\begin{ac}
The authors thank Kamal Bahmanpour, Ken-ichiroh Kawasaki and Takeshi Yoshizawa for valuable, useful and helpful comments.
\end{ac}
%%%%%%%%%%%%%%%%%%%%%%%%%%%%%%%%%%%%%

%%%%%%%%%%%%%%%%%%%%%%%%%%%%%%%%%%%%%%%%%%%%%%%%%
\end{document}